\pgfplotsset{compat=1.18}
\newtheorem{Theorem}{Theorem}[section]
\newtheorem{Corollary}[Theorem]{Corollary}
\newtheorem{remark}[Theorem]{Remark}
\def\blfootnote{\xdef\@thefnmark{}\@footnotetext}
\begin{document}
	\title[Global Estimate and Stampacchia Maximum Principle]{On a global estimate and a Stampacchia--type maximum principle for Lane--Emden systems}
	
\author{Leandro G. Fernandes Jr.}
\thanks{Departamento de Matemática, Universidade Federal de Roraima, 
	Av. Cap. Ene Garcêz 2413, Bairro Aeroporto, Boa Vista, Roraima, 69310-000, Brazil. 
	Emails: \texttt{leandro.fernandes@ufrr.br}, \texttt{leandro.fernandes@ufscar.br}}

\author{Edir J. F. Leite$^*$}
\thanks{Departamento de Matemática, Universidade Federal de São Carlos, 
	São Carlos, SP, 13565-905, Brazil. Email: \texttt{edirleite@ufscar.br}}

\thanks{$^*$Corresponding author.}
	
	\begin{abstract}
		
	We establish a global boundedness result for Lane--Emden systems involving general second-order elliptic operators in divergence form and arbitrary positive exponents whose product equals one. Furthermore, we observe that, for this class of systems --- and for certain operators in divergence form, including the case when both operators are the Laplacian --- it is not possible to recover the classical Stampacchia maximum principle as a particular case corresponding to single equations.

	\end{abstract}
	
	\subjclass[]{35B50, 35J47, 35D30}

	\keywords{Lane--Emden systems, maximum principle, global boundedness, De Giorgi-Nash-Moser iteration, biharmonic equations}

	\maketitle
	
	\section{Introduction}\label{intro}
	
	In this paper, we investigate Stampacchia--type formulations and global boundedness results for Lane--Emden systems. The main aim is to advance the understanding of such systems by extending analytical techniques and classical results from the scalar case, corresponding to a single equation, to the more general framework of nonlinear coupled systems.

	To this purpose, we consider the following nonhomogeneous system of partial differential equations:
	\begin{align}\label{system}
		\begin{cases} 
			-\mathcal{L}_{1}u = \lambda\rho(x)|v|^{\alpha-1}v + f_{1}(x) & \text{in } \Omega, \\
			-\mathcal{L}_{2}v = \mu\tau(x)|u|^{\beta-1}u + f_{2}(x) & \text{in } \Omega,
		\end{cases}
	\end{align}
where $\Omega$ is an open subset of $\mathbb{R}^n$, \(n\geq 2\), $\alpha$ and $\beta$ are positive numbers, and ${\mathcal{L}}_t$ ($t = 1, 2$) denotes a second-order uniformly elliptic operator in general divergence form, given by
\[
{\mathcal{L}}_t = \partial_i\left(a_t^{ij}(x) \partial_j + b_t^i(x)\right) + c_t^i(x)\partial_i + d_t(x).
\]
The coefficients \( a_t^{ij} \), \( b_t^i \), \( c_t^i \), and \( d_t \) are measurable functions satisfying the standard ellipticity and boundedness conditions:\begin{equation}\label{H}
	a_t^{ij}(x) \xi_i \xi_j \geq e_t |\xi|^2, \quad 
	e_t^{-2} \sum \left( |b_t^i(x)|^2 + |c_t^i(x)| \right) + e_t^{-1} |d_t(x)| \leq \nu_t^2,
\end{equation}
for all \( \xi \in \mathbb{R}^2 \), \( x \in \Omega \), and for some constants \( e_t > 0 \), \( \nu_t \geq 0 \). In addition, we assume that \( a_t^{ij} \in C(\overline{\Omega}) \) and, without loss of generality, that \( a_t^{ij} = a_t^{ji} \) for all \( i, j, t = 1, 2 \).

Furthermore, the zero-order term of the operator is assumed to be nonpositive, in the sense that
\[
\int_{\Omega} \left( d_{t} v_{t} - b_{t}^{i} D_{i} v_{t} \right) dx \leq 0, \quad 
\forall\, v_{t} \in C_{0}^{1}(\Omega),\ v_{t} \geq 0,
\]
which replaces the standard assumption that the zero-order term is nonnegative.

	The functions \(\rho\), \(\tau\), \(f_1\), and \(f_2\) are assumed to belong to \(L^{q/2}(\Omega)\) for some \(q > n\). For our purposes in Section~2, we take the parameters \(\lambda\) and \(\mu\) to be real numbers, and the exponents \(\alpha\) and \(\beta\) to be positive real numbers satisfying the reciprocal relation \(\alpha\beta = 1\).

	Several results have been established for Lane--Emden systems involving uniformly elliptic linear operators of second order in non-divergence form, including the existence of principal eigenvalues, maximum principles and comparison results, ABP--type estimates, Harnack inequalities, among others (see, for example, \cite{MR4072483, MR4732371, MR1765542}). However, relatively little is known for Lane--Emden systems governed by operators in divergence form. We refer to \cite{EM3} for some weak local boundedness and Harnack--type estimates for such systems in the plane. A possible explanation for this gap lies in the fact that the regularity theory for non-divergence form operators differs substantially in nature from that of divergence-form operators. For a comprehensive treatment of the regularity theory for weakly coupled cooperative elliptic systems involving operators in divergence form, we refer the reader to the book by Giaquinta~\cite{Giaquinta-multiple}. A detailed discussion on the different regularity requirements for elliptic equations in divergence and non-divergence form can be found in~\cite{MR657523,MR161019}. For further developments concerning elliptic equations in divergence form, see also~\cite{MR2601069,MR89986,MR170091,MR1214781,MR226198,MR170096}.

   In the homogeneous case of system~\eqref{system}, with \(\mathcal{L}_1 = \mathcal{L}_2 = \Delta\) and \(\lambda\rho = \mu\tau = 1\), it was proved by Kamburov and Sirakov in \cite{MR4531770} that, for two-dimensional smooth bounded domains, positive solutions of the superlinear Lane--Emden system are bounded independently of the exponents, provided they are comparable. Furthermore, for dimensions \(n \geq 3\), uniform a priori estimates were established in \cite{MR4728293} by Dai and Wu for positive solutions of higher-order superlinear Lane--Emden systems under Navier boundary conditions, in bounded domains.

	The main purpose of this work is to address the possibility of extending two classical results from the scalar setting to Lane--Emden systems involving operators in divergence form. It is important to emphasize that the coupling and nonlinearity inherent to the system introduce significant difficulties in generalizing results that are well understood in the case of single equations. Namely:
	
	The first of these is Stampacchia's maximum principle:
	
	Let $u \in W^{1,2}(\Omega)$ be a function such that $\mathcal{L}_1 u \geq 0$ in $\Omega$. Then,
	\[
	\sup_{\Omega} u \leq \sup_{\partial\Omega} u^+.
	\]
	
	The second is a global boundedness result:
	
	Let \( \mathcal{L}_1 \) be an operator satisfying the conditions in \eqref{H}, and assume that $f \in L^{q/2}(\Omega)$ for some $q > n$. Let $u \in W^{1,2}(\Omega)$ be a subsolution of the problem $\mathcal{L}_1 u = f$ in $\Omega$, satisfying $u \leq 0$ on $\partial\Omega$. Then,
	\[
		\sup_{\Omega} u \leq C \left( \|u^{+}\|_{L^{2}(\Omega)} + k \right),
	\]
	for a constant $C = C(n, \nu_1, q, |\Omega|)$ and $k := e_1^{-1} \|f\|_{L^{q/2}(\Omega)}$.
	
	Concerning the first result, one may ask whether the weak maximum principle extends to systems. For instance, suppose that $(u,v) \in (W^{1,2}(\Omega))^2$ is a weak subsolution (see Section~\ref{section 2} for the definition of a weak subsolution) to the system
	\[
	\begin{cases}
		-\Delta u = \lambda \rho(x) |v|^{\alpha - 1}v & \text{in } \Omega, \\
		-\Delta v = \mu \tau(x) |u|^{\beta - 1}u & \text{in } \Omega, \\
	\end{cases}
	\]
	for some $(\lambda, \mu) \in \mathbb{R}^2 \setminus \{(0,0)\}$ and exponents $\alpha, \beta > 0$ satisfying $\alpha  \beta = 1$. Does it follow that
	\[
	\sup_{\Omega} u \leq \sup_{\partial\Omega} u^{+} \quad \mbox{and} \quad \sup_{\Omega} v \leq \sup_{\partial\Omega} v^{+}?
	\]
	We show that this is not true in general. In fact,  we will study cases in which, for every $(\lambda, \mu) \in \mathbb{R}^2 \setminus \{(0,0)\}$, there exists a weak subsolution $(u, v) \in (W^{1,2}(\Omega))^2$ to the system above such that at least one of the following inequalities
	\begin{equation}\label{PM}
		\sup_{\Omega} u \leq \sup_{\partial\Omega} u^{+}, \qquad  \sup_{\Omega} v \leq \sup_{\partial\Omega} v^{+}
	\end{equation}
	fails.

A more precise discussion regarding the possibility that both inequalities may fail will be given in Section~\ref{section 2}.

	Although the weak maximum principle in the sense of Stampacchia does not hold for such systems, we are still able to establish a global boundedness result. By employing the De Giorgi--Nash--Moser iteration technique in a suitably adapted form for the two-dimensional setting, we obtain uniform bounds for solutions corresponding to the full range of exponents $\alpha, \beta > 0$ satisfying $\alpha\beta = 1$. More precisely, our main result is the following:

\begin{Theorem} \label{global boundedness}
	Let \( \Omega \subset \mathbb{R}^2 \) be an open set, and let \( \mathcal{L}_t \) (for \( t = 1, 2 \)) be an operator satisfying the conditions in \eqref{H}. Assume that \( \rho, \tau, f_1, f_2 \in L^{q/2}(\Omega) \) for some \( q > 2 \), and that \( \lambda, \mu \in \mathbb{R} \). Let also \( \alpha, \beta > 0 \) be such that \( \alpha \beta = 1 \), and assume that \( \alpha \leq 1 \).
	
	Let \( (u,v) \in (W^{1,2}(\Omega))^2 \) be a subsolution of the Lane--Emden system \eqref{system}, satisfying \( u \leq 0 \) and \( v \leq 0 \) on \( \partial\Omega \). Then,
	\[
		\| u^{+} \|_{L^{\infty}(\Omega)} + \left( \| v^{+} \|_{L^{\infty}(\Omega)} \right)^{\alpha}
		\leq C \left[ \| u^{+} \|_{L^{2}(\Omega)} + \| v^{+} \|_{L^{2}(\Omega)}^{\alpha} + k_1 + k_2^\alpha \right],
	\]
	for a constant \( C = C(\nu_t, \lambda, \mu, \| \rho \|_{L^{q/2}(\Omega)}, \| \tau \|_{L^{q/2}(\Omega)}, \alpha, q, |\Omega|) \), and constants \( k_t \) given by
	\[
	k_t := e_t^{-1} \| f_t \|_{L^{q/2}(\Omega)}.
	\]
\end{Theorem}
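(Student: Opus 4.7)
The plan is to prove Theorem \ref{global boundedness} by a De Giorgi--Nash--Moser iteration tailored to the coupled system, exploiting the fact that the reciprocal relation $\alpha\beta = 1$ determines a single compatible scaling between the two iteration indices. Setting $U := u^+$ and $V := v^+$, the boundary assumption $u, v \le 0$ on $\partial\Omega$ gives $U, V \in W^{1,2}_0(\Omega)$, so that for a parameter $\gamma \ge 1$ the powers $U^{2\gamma-1}$ and $V^{2\alpha\gamma-1}$ are nonnegative admissible test functions in the weak subsolution inequalities for the two equations. The asymmetric choice of exponent $\alpha\gamma$ on the $V$-side is forced by the need to close the iteration. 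Using the ellipticity in \eqref{H}, the sign condition on the zero-order data, and absorbing the lower-order contributions into the ellipticity term via a weighted Young step, one arrives at twin Caccioppoli-type estimates
\begin{align*}
\|\nabla U^\gamma\|_{L^2}^2 &\lesssim \gamma^2 \Bigl( |\lambda| \int_\Omega \rho\, V^\alpha U^{2\gamma-1} + \int_\Omega |f_1|\, U^{2\gamma-1} + \int_\Omega U^{2\gamma} \Bigr),\\
\|\nabla V^{\alpha\gamma}\|_{L^2}^2 &\lesssim \gamma^2 \Bigl( |\mu| \int_\Omega \tau\, U^\beta V^{2\alpha\gamma-1} + \int_\Omega |f_2|\, V^{2\alpha\gamma-1} + \int_\Omega V^{2\alpha\gamma} \Bigr).
\end{align*}

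The key algebraic step is to dominate the two coupling integrands by a common pair of quantities. Applying Young's inequality with the conjugate pair $(2\gamma, 2\gamma/(2\gamma-1))$ in the first line and $(2\alpha\gamma, 2\alpha\gamma/(2\alpha\gamma-1))$ in the second yields
\[
V^\alpha U^{2\gamma-1} \le \tfrac{1}{2\gamma}V^{2\alpha\gamma} + \tfrac{2\gamma-1}{2\gamma}U^{2\gamma}, \qquad U^\beta V^{2\alpha\gamma-1} \le \tfrac{1}{2\alpha\gamma}U^{2\gamma} + \tfrac{2\alpha\gamma-1}{2\alpha\gamma}V^{2\alpha\gamma},
\]
where the identity $\alpha\beta = 1$ is precisely what makes both splittings produce the same pair $(U^{2\gamma}, V^{2\alpha\gamma})$. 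H\"older's inequality with $\rho, \tau, f_1, f_2 \in L^{q/2}$ then introduces the dual exponent $s := q/(q-2)$, controlling the right-hand sides by $\|U\|_{L^{2\gamma s}}^{2\gamma}$, $\|V\|_{L^{2\alpha\gamma s}}^{2\alpha\gamma}$, and the constants $k_1, k_2$.

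Since $n = 2$, one has the Sobolev--Poincar\'e embedding $W^{1,2}_0(\Omega) \hookrightarrow L^{2\kappa}(\Omega)$ for any finite $\kappa$, with constant depending on $\kappa$ and $|\Omega|$; fixing some $\kappa > s$ converts the gradient bounds into a reverse-H\"older-type inequality schematically of the form
\[
\|U\|_{L^{2\gamma\kappa}}^{2\gamma} + \|V\|_{L^{2\alpha\gamma\kappa}}^{2\alpha\gamma} \le (C\gamma)^{\theta}\Bigl( \|U\|_{L^{2\gamma s}}^{2\gamma} + \|V\|_{L^{2\alpha\gamma s}}^{2\alpha\gamma} + k_1^{2\gamma} + k_2^{2\alpha\gamma}\Bigr),
\]
for some fixed $\theta > 0$ and $C$ depending on the allowed parameters. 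Iterating with $\gamma_j = (\kappa/s)^j \gamma_0$ for a suitably chosen $\gamma_0$, the Lebesgue exponents on the left blow up geometrically while the telescoping product of gain constants $\prod_j (C\gamma_j)^{\theta/\gamma_j}$ converges, yielding $\|U\|_\infty + \|V\|_\infty^\alpha$ on the left-hand side. The hypothesis $\alpha \le 1$ is finally used to compare the initial iteration norm $\|V\|_{L^{2\alpha}}^\alpha$ with $\|v^+\|_{L^2}^\alpha$ via the elementary interpolation $\|V\|_{L^{2\alpha}}^\alpha \le |\Omega|^{(1-\alpha)/2}\|V\|_{L^2}^\alpha$.

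The main obstacle, and the conceptual heart of the argument, is closing the two-sided coupling: the right-hand side of each equation involves the other unknown, so the iteration indices for the two equations must be matched in a way that is preserved after each Caccioppoli--Young--Sobolev pass. The reciprocal relation $\alpha\beta = 1$ is exactly what forces both Young's splittings to produce the same pair of controlling quantities $(U^{2\gamma}, V^{2\alpha\gamma})$; any other relation between $\alpha$ and $\beta$ would yield mismatched Lebesgue exponents and break the self-improvement. A secondary but genuine technical difficulty is the two-dimensional setting: the usual Sobolev gain exponent $2^* = 2n/(n-2)$ is unavailable, so the iteration's geometric factor $\kappa/s > 1$ must be produced instead from the hypothesis $q > n = 2$, requiring careful bookkeeping of how the planar Sobolev constant grows with $\kappa$ so that the product over iterations remains finite.
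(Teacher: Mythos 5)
The proposal follows the same overall strategy as the paper's proof --- De Giorgi--Nash--Moser iteration combined with a Young-inequality splitting of the coupling terms that is balanced by the relation $\alpha\beta=1$ --- but with one genuinely different, and in fact cleaner, bookkeeping choice. The paper tests the two inequalities with truncated powers $H_t$ that both use the \emph{same} exponent $\beta_1$, so the Young splits produce the four quantities $u_1^{2\beta_1},\,u_2^{2\alpha\beta_1}$ from the first equation and $u_2^{2\beta_1},\,u_1^{2\beta\beta_1}$ from the second; these are then reconciled by interpolation, H\"older's inequality, and the assumption $\alpha\le 1$, at the price of a $2\beta$-th power appearing in the intermediate estimate for the second equation. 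You instead take asymmetric test exponents, $\gamma$ for $u$ and $\alpha\gamma$ for $v$, so that both Young splits produce the \emph{same} pair $(U^{2\gamma},V^{2\alpha\gamma})$; this avoids the mismatched fourth quantity and the $2\beta$-power, and the reverse-H\"older iterate closes without any further interpolation step. Your remark that the asymmetric choice is ``forced'' is not strictly accurate --- the paper's symmetric choice also works --- but yours is the more transparent normalization.

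Two technical points need attention. First, $U\in W_0^{1,2}(\Omega)$ does not by itself give $U^{2\gamma-1}\in W_0^{1,2}(\Omega)$, so these are not admissible test functions as written; the paper handles this with the standard Moser truncation, defining $H_t$ to be $z^{\beta_1}-k_t^{\beta_1}$ on $[k_t,N_t]$ and \emph{linear} for $z\ge N_t$, testing with $G_t(u_t)=\int_{k_t}^{u_t}|H_t'|^2$, and passing $N_t\to\infty$ at the end. You should do the same for both $H_1(z)\sim z^\gamma$ and $H_2(z)\sim z^{\alpha\gamma}$. Second, your closing sentence suggests launching the iteration at a $\gamma_0$ with $2\gamma_0 s=2$; but then $2\gamma_0-1=\tfrac{2}{s}-1<0$ whenever $2<q<4$ (so that $s>2$), and $2\alpha\gamma_0-1$ is negative for small $\alpha$ regardless, making the starting test exponents negative. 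The clean fix, used by the paper, is to start at $\gamma_0=1$ (arriving at $L^{q^*}$-norms of $U$ and $V^\alpha$ in the limit) and only afterwards apply H\"older on the finite-measure set $\Omega$ to pass from $L^{q^*}$ down to $L^2$; the bound $\|V\|_{L^{2\alpha}}^\alpha\le|\Omega|^{(1-\alpha)/2}\|V\|_{L^2}^\alpha$ you record is indeed what is used there for the $v$-component, but it should be invoked at that final stage rather than to justify the initial iterate.
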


\begin{remark}\label{remark}
	The argument used in the proof of Theorem~\ref{global boundedness} can be extended to the case \( q > n > 2 \), provided that
	\[
	\frac{q(n-2)}{n(q-2)} \leq \alpha, \beta \leq \frac{n(q-2)}{q(n-2)}.
	\]
	In particular, no restriction is required when \( \alpha = \beta = 1 \).
\end{remark}

We observe that the restriction in the case \( n > 2 \) stems from the Sobolev embedding, which limits the admissible range of \( \alpha \) and \( \beta \).

	When $f_1 \equiv 0$ and $f_2 \equiv 0$, the behavior of system~\eqref{system} becomes resonant when $\alpha\beta = 1$. This resonance motivates the choice of such exponents and, in particular, allows for a natural extension of results previously established in the scalar setting.

	As a direct consequence of the above global boundedness result, we derive a counterpart for weak solutions of biharmonic equations of the type
	\begin{equation}\label{bih}
		(-\Delta)^2 v = \lambda\rho(x)v + f(x) \quad \text{in } \Omega,
	\end{equation}
	defined on open sets $\Omega \subset \mathbb{R}^n$, which correspond to systems of the form
	\begin{equation}\label{S1.4}
		\begin{cases}
			-\Delta u = \lambda\rho(x) v + f(x) & \text{in } \Omega, \\
			-\Delta v = u & \text{in } \Omega.
		\end{cases}
	\end{equation}

	\textit{A priori} estimates for biharmonic functions (i.e., when \(f \equiv 0\) and \(\lambda = 0\)) and, more generally, for polyharmonic functions, constitute classical and well-studied problems. For instance, in \cite{MR2086750}, Busca and Sirakov established a Harnack inequality for classical solutions of the polyharmonic equation
	\[
	(-\Delta)^m v = \lambda\rho(x)v + f(x) \quad \text{in } \Omega \subset \mathbb{R}^n,
	\]
	where \(\rho \in L^{\infty}(\Omega)\). Subsequently, Caristi and Mitidieri~\cite{MR2240050} proved a Harnack inequality for weak solutions of the biharmonic equation (i.e., \(m = 2\)) in the Sobolev space \(W^{2,2}(\Omega)\), under the assumption \(f \equiv 0\), and where the potential \(\rho\) belongs to \(K^{n,2}(\Omega)\), the natural Kato class associated with the operator \((-\Delta)^2\) (see Definition~2.2 in~\cite{MR2240050}).

	Theorem~\ref{global boundedness} and Remark~\ref{remark}, when applied to the system~\eqref{S1.4}, yields the following estimate for weak subsolutions of nonhomogeneous biharmonic equations:

\begin{Corollary}
	Let $\Omega \subset \mathbb{R}^n$ be an open set, and assume that $\rho, f \in L^{q/2}(\Omega)$ for some \( q > n \), and let \( \lambda \in \mathbb{R} \).
	
	Let \( v \in W^{3,2}(\Omega) \) be a weak subsolution of the biharmonic equation~\eqref{bih}, satisfying \( v \leq 0 \) and \( -\Delta v \leq 0 \) on \( \partial\Omega \). Then,
	\begin{align*}
		\Vert (-\Delta v)^+ \Vert_{L^{\infty}(\Omega)} + \left(\Vert v^+ \Vert_{L^{\infty}(\Omega)}\right)^{\alpha}
		\leq\ &C \left( \Vert (-\Delta v)^+ \Vert_{L^2(\Omega)} + \Vert v^+ \Vert_{L^2(\Omega)} \right. \\
		&\left. + \Vert f \Vert_{L^{q/2}(\Omega)} \right),
	\end{align*}
	for some constant \( C = C\left(\lambda, n, \Vert \rho \Vert_{L^{q/2}(\Omega)}, q, \vert\Omega\vert\right) \).
\end{Corollary}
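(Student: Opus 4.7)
The plan is to recast the biharmonic inequality as a special case of the system~\eqref{system} and then invoke Theorem~\ref{global boundedness}, in the extended form given by Remark~\ref{remark}, with the parameter choice $\alpha=\beta=1$.

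First, I would set $u:=-\Delta v$. Since $v\in W^{3,2}(\Omega)$, one has $u\in W^{1,2}(\Omega)$ and the identity $-\Delta v=u$ holds almost everywhere; in particular, $(u,v)$ satisfies the second equation of \eqref{S1.4} as an equality in the weak sense. The hypothesis that $v$ is a weak subsolution of $(-\Delta)^{2}v=\lambda\rho(x)v+f(x)$ then translates directly, via a single integration by parts against any nonnegative test function $\phi\in C_0^\infty(\Omega)$, into $-\Delta u\le\lambda\rho(x)v+f(x)$ in the weak sense, which is the first equation of \eqref{S1.4}. Hence $(u,v)\in(W^{1,2}(\Omega))^{2}$ is a weak subsolution of \eqref{system} with the choice $\mathcal{L}_{1}=\mathcal{L}_{2}=\Delta$, $\alpha=\beta=1$, $\mu=1$, $\tau\equiv 1$, $f_{1}=f$, and $f_{2}\equiv 0$. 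The boundary assumptions $v\le 0$ and $-\Delta v\le 0$ on $\partial\Omega$ become $v\le 0$ and $u\le 0$ on $\partial\Omega$, as required for the theorem.

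Next, I would apply Theorem~\ref{global boundedness}. In dimension two this is direct; for $n>2$ one uses Remark~\ref{remark}, noting that the choice $\alpha=\beta=1$ lies in the admissible window $[q(n-2)/(n(q-2)),\,n(q-2)/(q(n-2))]$ for every $q>n$. With ellipticity constants $e_{1}=e_{2}=1$ for the Laplacian, we have $k_{1}=\|f\|_{L^{q/2}(\Omega)}$ and $k_{2}=0$, so the conclusion of Theorem~\ref{global boundedness}, evaluated at $\alpha=1$, reduces to
\[
\|u^{+}\|_{L^{\infty}(\Omega)}+\|v^{+}\|_{L^{\infty}(\Omega)}\le C\bigl(\|u^{+}\|_{L^{2}(\Omega)}+\|v^{+}\|_{L^{2}(\Omega)}+\|f\|_{L^{q/2}(\Omega)}\bigr).
\]
Substituting back $u=-\Delta v$ yields the claimed bound, and the dependence of the constant on $\|\tau\|_{L^{q/2}(\Omega)}$ in Theorem~\ref{global boundedness} collapses into a dependence on $|\Omega|$ because $\tau\equiv 1$, leaving the announced dependence $C=C(\lambda,n,\|\rho\|_{L^{q/2}(\Omega)},q,|\Omega|)$.

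I do not foresee any genuine obstacle. The only point worth verifying carefully is the compatibility between the weak subsolution notion for the scalar biharmonic inequality and the corresponding notion for the coupled system; but this is immediate once the regularity $v\in W^{3,2}(\Omega)$ is used, since it places $u=-\Delta v$ in $W^{1,2}(\Omega)$ and lets one split the fourth-order duality pairing into two second-order pairings without any density argument.
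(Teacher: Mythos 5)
Your proposal is correct and follows essentially the same route as the paper: the paper likewise sets $u:=-\Delta v$, uses the regularity $v\in W^{3,2}(\Omega)$ only to ensure $(u,v)\in (W^{1,2}(\Omega))^2$, identifies \eqref{bih} with the system \eqref{S1.4} (i.e.\ $\mathcal{L}_1=\mathcal{L}_2=\Delta$, $\alpha=\beta=1$, $\mu=1$, $\tau\equiv 1$, $f_1=f$, $f_2\equiv 0$), and then applies Theorem~\ref{global boundedness} together with Remark~\ref{remark} for $n>2$. Your additional observations (the splitting of the fourth-order pairing into second-order ones, $k_1=\|f\|_{L^{q/2}(\Omega)}$, $k_2=0$, and the absorption of $\|\tau\|_{L^{q/2}(\Omega)}$ into a dependence on $|\Omega|$) are exactly the implicit steps of the paper's argument.
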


In this context, the regularity \( v \in W^{3,2}(\Omega) \) is essential, as it ensures that \( (u, v) \in (W^{1,2}(\Omega))^2 \), where \( u := -\Delta v \).

Observe that by taking \( u = 1 - |x|^2 \), one verifies that \( (-\Delta)^2 u = 0 \) in \( B_1(0) \). This simple example illustrates that the weak maximum principle, in the sense of Stampacchia, does not hold for biharmonic equations.

Finally, at the end of Subsection~2.1, after presenting examples directly related to the failure of the Stampacchia--type maximum principle for Lane--Emden systems in which both operators are taken to be the Laplacian, we include a brief discussion on the possible validity of ABP--type estimates, maximum and comparison principles, and the existence of eigenvalue curves in the setting of divergence-form operators, among other issues. We also address the inherent limitations on which operators can be simultaneously written in both divergence and non-divergence forms, and emphasize that, in such cases, the appropriate analytical framework is the one developed for the non-divergence setting, as established in~\cite{MR1765542, MR4072483}.\\

	\section{ Proofs of the Main Results }\label{section 2}

	We begin this section with the following definition. A pair \((u, v)\) of weakly differentiable functions is called a \textit{weak subsolution} of system~\eqref{system} in \(\Omega\) if the functions \(A_{t}^{i}(x, u, Du)\) and \(B_{t}(x, u, Du)\), for \(t = 1, 2\), are locally integrable, and the following inequalities hold for all nonnegative test functions \(w_1, w_2 \in C_0^1(\Omega)\):
	\begin{align}\label{subsolution-1}
		\int_{\Omega} \left[ A_{1}^{i}(x, u, Du) D_{i}w_{1} - B_{1}(x, u, Du) w_{1} \right]\, dx 
		\leq \int_{\Omega} \lambda \rho(x) |v|^{\alpha-1} v w_{1} \, dx,
	\end{align}
	\begin{align}\label{subsolution-2}
		\int_{\Omega} \left[ A_{2}^{i}(x, v, Dv) D_{i}w_{2} - B_{2}(x, v, Dv) w_{2} \right]\, dx 
		\leq \int_{\Omega} \mu \tau(x) |u|^{\beta-1} u w_{2} \, dx,
	\end{align}
	where the functions \(A_t^i, B_t : \Omega \times \mathbb{R} \times \mathbb{R}^2 \rightarrow \mathbb{R}\), for \(i,t = 1,2\), are defined by
	\begin{align}\label{conditions-0}
		\begin{cases}
			A_{t}^{i}(x, z_t, p) = a_{t}^{ij}(x) p_{j} + b_{t}^{i}(x) z_t, \\[4pt]
			B_{t}(x, z_t, p) = c_{t}^{i}(x) p_{i} + d_{t}(x) z_t + f_{t}(x),
		\end{cases}
	\end{align}
	with \(b_t = (b_t^1, b_t^2)\) and \(c_t = (c_t^1, c_t^2)\) for \(t = 1,2\).
	
	It is worth noting that the test functions \(w_1\) and \(w_2\), initially taken in \(C_0^1(\Omega)\), can be extended by density to arbitrary nonnegative functions in \(W_0^{1,2}(\Omega)\).

	A straightforward computation (see Chapter~8 in~\cite{Trudingerbook})
	 shows that, for any \(0 < \varepsilon \leq 1\), one has
	\begin{align}\label{conditions-1}
		\begin{cases}
			p_{i} A_{t}^{i}(x, z_t, p) 
			\geq \dfrac{e_{t}}{2} \left( |p|^2 - 2 \overline{b}_{t} \, \overline{z}_t^{2} \right), \\[6pt]
			|\overline{z}_t B_{t}(x, z_t, p)|
			\leq \dfrac{e_{t}}{2} \left( \varepsilon |p|^2 + \dfrac{\overline{b}_{t}}{\varepsilon} \, \overline{z}_t^{2} \right),
		\end{cases}
	\end{align}
where \( \overline{z}_t = |z_t| + k_t \), with fixed constants \( k_t > 0 \), and
\[
\overline{b}_t = e_t^{-2} \left( |b_t|^2 + |c_t|^2 \right) + e_t^{-1} \left( |d_t| + k_t^{-1} |f_t| \right), \quad \mbox{with } k_t = e_t^{-1} \|f_t\|_{L^{q/2}(\Omega)}.
\]

	\subsection{Examples and Discussions}
	
We begin this section by noting that not every subsolution \((u,v) \in (W^{1,2}(\Omega))^2\) of the Lane--Emden system \eqref{system}, with \(f_1 \equiv f_2 \equiv 0\), satisfies the pointwise estimate \eqref{PM}, even in the simplest case where \(\mathcal{L}_1 = \mathcal{L}_2 = \Delta\).

What we can guarantee through Stampacchia’s maximum principle for a single equation is that, if \((u,v) \in (W^{1,2}(\Omega))^2\) is a subsolution of system~\eqref{system}, with \(f_1 \equiv f_2 \equiv 0\), and \(\lambda v \leq 0\), \(\mu u \leq 0\) in \(\Omega\), then both inequalities in~\eqref{PM} hold.

	\begin{figure}[H]
			\centering
			\begin{tikzpicture}
				\begin{axis}[
					axis lines=middle,
					xmin=0, xmax=5.2,
					ymin=0, ymax=10.5,
					xtick=\empty,
					ytick=\empty,
					width=10cm,
					height=8cm,
					clip=false,
					axis line style={->}
					]
					
					\addplot[
					domain=0.1:5,
					samples=300,
					thick,
					black
					]{1/x};
					
					\addplot[
					domain=0:5,
					samples=2,
					dashed,
					thick,
					black
					]{1.2*x};
					
					\addplot[
					only marks,
					mark=*,
					mark size=2pt,
					black
					] coordinates {(0.91287, 1.09545)};
					
					\draw[->, thick] (axis cs:1.1,2.3) -- (axis cs:0.954, 1.26);
					\node at (axis cs:1.2,2.7) {$(\lambda_1(a), \mu_1(a))$};
					
					\draw[->, thick] (axis cs:4.0,6.9) -- (axis cs:3.0,3.7);
					\node at (axis cs:4.0,7.2) {$\mu = a  \lambda$};
					
					\draw[->, thick] (axis cs:3.4,1.3) -- (axis cs:2.5,0.5);
					\node at (axis cs:3.6,1.3) {$\Lambda_1$};
					
				\end{axis}
				
				\node at (8.1,-0.25) {\large $\lambda$};
				\node at (-0.25,5.6) {\large $\mu$};
			\end{tikzpicture} \label{fig:eigenvalue-curve}
			\caption{Graph of the eigenvalue curve associated with the homogeneous Lane--Emden system involving the Laplace operator, together with the line $\mu = a  \lambda$, which intersects the curve at the point $(\lambda_1(a), \mu_1(a))$.}
			
		\end{figure}
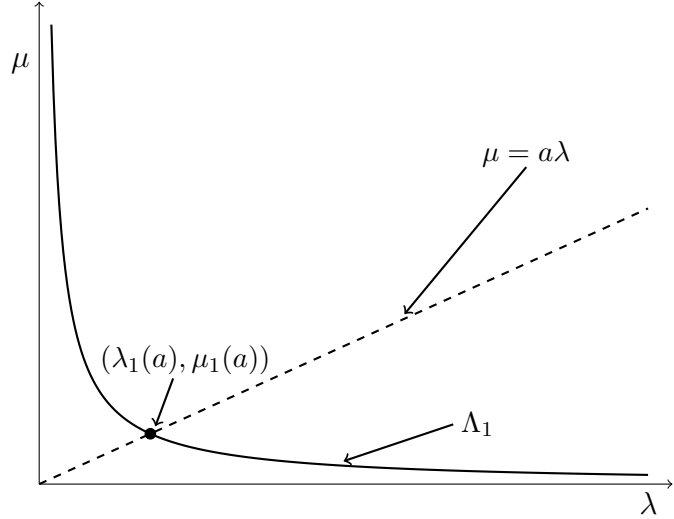
		
To address situations other, we now invoke the result established by Montenegro in \cite{MR1765542} for operators in non-divergence form, specifically in the case where both operators are the Laplacian, which applies to both the divergence and non-divergence formulations. In Figure~\the\value{figure}, we display the graph of the first eigenvalue curve.
	
To begin illustrating the example, let us consider a fixed pair \((\lambda, \mu) \in \mathbb{R}^2\) lying outside the set \(\overline{\mathcal{C}_1} \setminus \Lambda_1\), where \(\Lambda_1\) denotes the trace of the curve of all principal eigenvalues associated with the homogeneous Lane--Emden system in which both operators are the Laplacian, and \(\mathcal{C}_1\) is the open region in the first quadrant lying below \(\Lambda_1\). For more details on this curve, see \cite{MR1765542, MR4072483}. Suppose further that \((\lambda, \mu) \in \mathbb{R}_+^2\); then it holds that \(\lambda \geq \lambda_1(a)\) and \(\mu \geq \mu_1(a)\), where \((\lambda_1(a), \mu_1(a))\) denotes the principal eigenvalue associated with the problem
\begin{equation*}
	\left\{
	\begin{array}{ll}
		-\Delta u = \lambda\rho(x)\vert v\vert^{\alpha-1}v  & \text{in } \Omega,\\[4pt]
		-\Delta v = \mu\tau(x)\vert u\vert^{\beta-1}u  & \text{in } \Omega,\\[4pt]
		u = v = 0 & \text{on } \partial\Omega,
	\end{array}
	\right.
\end{equation*}
with \(a = \mu / \lambda\), see Figure~\the\value{figure}. The corresponding pair of eigenfunctions \((\varphi_a, \psi_a)\) belongs to \((W^{2,n}(\Omega))^2\), the weight functions \(\rho\) and \(\tau\) belong to \(L^n(\Omega)\), and the exponents fulfill the relation \(\alpha  \beta = 1\); see \cite{MR1765542}. Then \((\varphi_a, \psi_a)\) is a positive weak subsolution of
	\[
	\left\{
	\begin{array}{llll}
		-\Delta \varphi_a &= \lambda\rho(x) \psi_a^{\alpha} & \text{in } \Omega,\\ 
		-\Delta \psi_a &= \mu\tau(x) \varphi_a^{\beta} & \text{in } \Omega,
	\end{array}
	\right.
	\]
	with \(\varphi_a = \psi_a = 0\) on \(\partial\Omega\). Neither of the estimates in \eqref{PM} holds in this case.

	Now suppose that \(\lambda < 0\). By Propositions~2.10 and 2.11 in \cite{MR1765542}, there exists a sufficiently large constant \(a > 0\) such that \(\lambda \leq -\lambda_1(a)\) and \(\mu \geq -\mu_1(a)\). In this case, we obtain the following system:
	\[
	\left\{
	\begin{array}{llll}
		-\Delta \varphi_a &\leq \lambda\rho(x) \, |\psi_a|^{\alpha-1}(-\psi_a)  & \text{in } \Omega,\\[4pt]
		-\Delta (-\psi_a) &\leq \mu\tau(x)\, \varphi_a^{\beta} & \text{in } \Omega,
	\end{array}
	\right.
	\]
	with boundary conditions \(\varphi_a = -\psi_a = 0\) on \(\partial\Omega\). In this case, at least one of the estimates in~\eqref{PM} does not hold.

	In the case where \(\lambda \geq 0\) and \(\mu < 0\), we again invoke Propositions~2.10 and 2.11 from \cite{MR1765542}. Accordingly, we can choose a sufficiently small constant \(a > 0\) such that \(\lambda \geq -\lambda_1(a)\) and \(\mu \leq -\mu_1(a)\).
	
	As a consequence, we obtain the following system:
	\[
	\left\{
	\begin{array}{llll}
		-\Delta (-\varphi_a) &\leq \lambda\rho(x)\, \psi_a^{\alpha} & \text{in } \Omega,\\[4pt]
		-\Delta \psi_a &\leq \mu\tau(x)\, |\varphi_a|^{\beta - 1}(-\varphi_a) & \text{in } \Omega,
	\end{array}
	\right.
	\]
	with boundary conditions \(-\varphi_a = \psi_a = 0\) on \(\partial\Omega\). Thus, once again, at least one of the inequalities in~\eqref{PM} fails to hold.
	
	It is worth mentioning that the examples above can be extended to the class of non-divergence form operators that can also be written in divergence form. This extension is based on the eigenvalue curves established by Montenegro in~\cite{MR1765542}. For a broader discussion (see also p.\,2 in~\cite{MR4106729}) on the intersection between these two classes of operators.

We now present another class of examples, valid for any \(\lambda \geq 0\) and \(\mu > 0\). Namely, observe that the functions \(v(x) = \mu(1 - |x|^2)\) and \(u(x) = 2n\) define a classical solution to the system
\[
\left\{
\begin{array}{ll}
	-\Delta u = 0 \leq \lambda \rho(x)\, v^{\alpha} & \text{in } B_1(0),\\[4pt]
	-\Delta v = 2\mu n \leq \mu \tau(x)\, u^{\beta} & \text{in } B_1(0),
\end{array}
\right.
\]
with \(\alpha\beta = 1\), \(\tau \geq 1\), and \(\beta \geq 1\). In this case, we have
	\[
	\sup_{\Omega} v = \mu > \sup_{\partial\Omega} v^+ = 0,
	\]
	showing that at least one of the estimates in~\eqref{PM} fails to hold.
	
	We can also construct a similar example where the inequality for \(u\) fails. Let \(\lambda > 0\), \(\mu \geq 0\), and consider the functions \(u(x) = \lambda(1 - |x|^2)\) and \(v(x) = 2n\). Then \((u,v)\) is a classical solution to the system
	\[
	\left\{
	\begin{array}{ll}
		-\Delta u = 2\lambda n \leq \lambda \rho(x)\, v^{\alpha} & \text{in } B_1(0),\\[4pt]
		-\Delta v = 0 \leq \mu \tau(x)\, u^{\beta} & \text{in } B_1(0),
	\end{array}
	\right.
	\]
	with \(\alpha  \beta = 1\), \(\rho \geq 1\), and \(\alpha \geq 1\). In this case, it follows that
	\[
	\sup_{\Omega} u = \lambda > \sup_{\partial\Omega} u^+ = 0,
	\]
	so once again, at least one of the inequalities in~\eqref{PM} does not hold.

	In the particular examples considered, we observe that both inequalities fail to hold in the portion of the first quadrant lying outside \(\mathcal{C}_1\), that is, on and above \(\Lambda_1\). In the remaining cases, only one of the inequalities fails to satisfy estimate~\eqref{PM}.

	Based on the preceding discussion, it is natural to pose the following questions: Does there exist a mixed inequality of Stampacchia type involving both the supremum and the infimum of the subsolutions? Moreover, is it possible to establish an ABP--type estimate in which a constant appears alongside the supremum of the subsolutions at the boundary, depending explicitly on the dimension, the functions \(\rho\) and \(\tau\), the parameters \(\lambda\) and \(\mu\), the domain \(\Omega\), the exponent \(\beta\), and other relevant quantities that do not involve the subsolutions themselves? All these questions concern the Lane--Emden systems in divergence form.

	Although the theories of second-order linear operators in divergence and non-divergence form exhibit distinct characteristics, there exists a notable intersection that must be taken into account. This observation suggests that, if it is possible to establish a general maximum principle for the Lane-–Emden system, then such a principle should hold only inside the region $\overline{\mathcal{C}_1} \setminus \Lambda_1$, where $\mathcal{C}_1$ and $\Lambda_1$ are defined analogously to those above, with respect to the corresponding operators; see \cite{MR4072483} and \cite{MR1765542} for further details.

	Thus, another open question is whether a similar structure could exist for divergence form operators, including, for instance, an eigenvalue curve, a region where the maximum principle holds, and possibly a comparison principle, as in the non-divergence case.

	We are now in a position to state the main result of this paper concerning the global boundedness of subsolutions.
	
	\subsection{ Global Boundedness Estimate}

	Throughout this section, we set \(q^{\ast} = \dfrac{2q}{q - 2}\) for all \(q > 2\).
	 \begin{proof}[Proof of Theorem~\ref{global boundedness}] For $\beta_{1}\geq 1$ and $N_t>k_t $, let us define $H_t\in C^{1}([k_t,+\infty)$ by setting $H_t(z)=z^{\beta_{1}}-k_t^{\beta_{1}}$ for $z\in [k_t,N_t]$ and $H_t(z)=\beta_{1}N_t^{\beta_{1}-1}\left(z-N_{t}\right)+\left(N_{t}^{\beta_{1}}-k_{t}^{\beta_{1}}\right)$ for $z\geq N_t$.
		
		Let us now set $u_{1}=u^{+}+k_1$ and $u_{2}=v^{+}+k_2$  and take \begin{align*}
			G_t(u_{t})=\int_{k_t}^{u_{t}}\vert H_t'(s)\vert^{2}ds.
		\end{align*}
		
		We will now divide the proof into steps.
		
		\noindent \textsl{\underline{Step one}}: There exist constants \( C_1 = C_1(s_1, \Omega) > 0 \) and \( C_2 = C_2(s_2, \Omega) > 0 \) such that

		 \begin{align*}
			&\Vert H_{1}(u_{1}) \Vert_{L^{2\chi}(\Omega)}^{2} 
			\leq C_1 \Vert\overline{b}_1\Vert_{L^{q/2}(\Omega)}\Vert u_{1} H_{1}'(u_{1}) \Vert_{L^{q^{\ast}}(\Omega)}^{2} 
			\\
			&+ |\lambda| \int_{\Omega} |\rho(x)|\, |v|^{\alpha} G_{1}(u_{1}) \, dx
		\end{align*}
		and
		\begin{align*}
			&\Vert H_{2}(u_{2}) \Vert_{L^{2\eta}(\Omega)}^{2} 
			\leq C_2\Vert\overline{b}_2\Vert_{L^{q/2}(\Omega)}\Vert u_{2} H_{2}'(u_{2}) \Vert_{L^{q^{\ast}}(\Omega)}^{2}\\ 
			&+ |\mu| \int_{\Omega} |\tau(x)|\, |u|^{\beta} G_{2}(u_{2}) \, dx,
		\end{align*}
		
		where $\chi=\frac{s_{1}}{s_{1}-2}$ with $2<s_{1}<+\infty$, and $\eta=\frac{s_{2}}{s_{2}-2}$ with $2<s_{2}<+\infty$.
		
	Indeed, since the pair \((u, v)\) is a weak subsolution of system~\eqref{system} — more precisely, it satisfies inequalities~\eqref{subsolution-1} and~\eqref{subsolution-2} — we may invoke the identities~\eqref{conditions-0}, the structural conditions~\eqref{conditions-1}, the inequality \( G_t(s) \leq s G_t'(s) \), and the equalities
\[
Du = Du_1 \quad \text{and} \quad Dv = Du_2 \quad \text{a.e. in } \Omega,
\]
to derive the following estimates:

	\begin{align*}
		\int_{\Omega} |Du_1|^2 G_1'(u_1) \, dx 
		&\leq \int_{\Omega} \left( \overline{b}_1 G_1'(u_1) u_1^2 + \frac{2}{e_1} G_1(u_1) |B_1(x, u, Du)| \right) dx \\
		&\leq \varepsilon_3 \int_{\Omega} G_1'(u_1) |Du_1|^2 \, dx 
		+ \left(1 + \frac{1}{\varepsilon_3} \right) \int_{\Omega} \overline{b}_1 G_1'(u_1) u_1^2 \, dx \\
		&\quad + \int_{\Omega} |\lambda \rho(x)|\, |v|^{\alpha} G_1(u_1) \, dx,
	\end{align*}
	and similarly,
	\begin{align*}
		\int_{\Omega} |Du_2|^2 G_2'(u_2) \, dx 
		&\leq \int_{\Omega} \left( \overline{b}_2 G_2'(u_2) u_2^2 + \frac{2}{e_2} G_2(u_2) |B_2(x, v, Dv)| \right) dx \\
		&\leq \varepsilon_4 \int_{\Omega} G_2'(u_2) |Du_2|^2 \, dx 
		+ \left(1 + \frac{1}{\varepsilon_4} \right) \int_{\Omega} \overline{b}_2 G_2'(u_2) u_2^2 \, dx \\
		&\quad + \int_{\Omega} |\mu \tau(x)|\, |u|^{\beta} G_2(u_2) \, dx,
	\end{align*}
where \( \varepsilon_3 \) and \( \varepsilon_4 \) are arbitrary real numbers in the interval \( (0,1) \). Here, we employed the fact that \( G_1(u_1) \) and \( G_2(u_2) \) are valid test functions in the weak formulations corresponding to \eqref{subsolution-1} and \eqref{subsolution-2}, respectively.

		Now, choosing \( \varepsilon_3 = \varepsilon_4 = \tfrac{1}{2} \), we obtain
		
		\begin{align*}
			\int_{\Omega} \vert Du_{1} \vert^{2} G_{1}'(u_{1}) \, dx &\leq 6 \int_{\Omega} \overline{b}_{1} G_{1}'(u_{1}) u_{1}^{2} \, dx \\
			&\quad + \int_{\Omega} \vert\lambda \rho(x)\vert \vert v \vert^{\alpha} G_{1}(u_{1}) \, dx,
		\end{align*}
		
		and
		
		\begin{align*}
			\int_{\Omega} \vert Du_{2} \vert^{2} G_{2}'(u_{2}) \, dx &\leq 6 \int_{\Omega} \overline{b}_{2} G_{2}'(u_{2}) u_{2}^{2} \, dx \\
			&\quad + \int_{\Omega} \vert\mu \tau(x)\vert \vert u \vert^{\beta} G_{2}(u_{2}) \, dx.
		\end{align*}
		
		Since \( H_{1}(u_{1}) \) and \( H_{2}(u_{2}) \) belong to \( W^{1,2}_{0}(\Omega) \), we can apply the Sobolev inequality and Hölder's inequality to obtain		
		
	\begin{align*}
	&\Vert H_{1}(u_{1}) \Vert_{L^{2\chi}(\Omega)}^{2} 
	\leq C_1 \Vert\overline{b}_1\Vert_{L^{q/2}(\Omega)}\Vert u_{1} H_{1}'(u_{1}) \Vert_{L^{q^{\ast}}(\Omega)}^{2} 
	\\
	&+ |\lambda| \int_{\Omega} |\rho(x)|\, |v|^{\alpha} G_{1}(u_{1}) \, dx
	\end{align*}
		and
	\begin{align*}
			&\Vert H_{2}(u_{2}) \Vert_{L^{2\eta}(\Omega)}^{2} 
			\leq C_2\Vert\overline{b}_2\Vert_{L^{q/2}(\Omega)}\Vert u_{2} H_{2}'(u_{2}) \Vert_{L^{q^{\ast}}(\Omega)}^{2}\\ 
			&+ |\mu| \int_{\Omega} |\tau(x)|\, |u|^{\beta} G_{2}(u_{2}) \, dx,
		\end{align*}
		
		which finishes the step one.

   \noindent
   \textsl{\underline{Step two}}: Let \( \varepsilon_j \in (0,1) \) be arbitrary for \( j = 5,6,7,8 \). Then, there exists a constant \( C > 0 \), depending only on \( |\Omega| \) and \( q \), such that
   \begin{align*}
   	&\int_{\Omega} |\lambda|\, |\rho(x)|\, |v|^{\alpha} G_{1}(u_{1}) \, dx 
   	\leq \beta_{1}^{2} \frac{2\beta_{1}-1}{2\beta_{1}}  
   	\Vert \lambda \rho \Vert_{L^{q/2}(\Omega)} 
   	\bigg(
   	\varepsilon_{5} \Vert u_{1}^{\beta_{1}} \Vert_{L^{2\chi}(\Omega)} 
   	+ {}\\
   	&\qquad\qquad\qquad\qquad\qquad\qquad+
   	C \varepsilon_{5}^{-\sigma_{1}} \Vert u_{1}^{\beta_{1}} \Vert_{L^{q^{\ast}}(\Omega)}
   	\bigg)^{2} \\
   	&\quad\qquad\qquad  +\frac{\beta_{1}^{2}}{2\beta_{1}} 
   	\Vert \lambda \rho \Vert_{L^{q/2}(\Omega)} 
   	\bigg(
   	\varepsilon_{6} \Vert u_{2}^{\alpha \beta_{1}} \Vert_{L^{2\chi}(\Omega)} 
   	+ C \varepsilon_{6}^{-\sigma_{1}} \Vert u_{2}^{\alpha \beta_{1}} \Vert_{L^{q^{\ast}}(\Omega)}
   	\bigg)^{2}
   \end{align*}
   and
   \begin{align*}
   	&\int_{\Omega} |\mu|\, |\tau(x)|\, |u|^{\beta} G_{2}(u_{2}) \, dx 
   	\leq \beta_{1}^{2} \frac{2\beta_{1}-1}{2\beta_{1}} 
   	\Vert \mu \tau \Vert_{L^{q/2}(\Omega)} 
   	\bigg(
   	\varepsilon_{7} \Vert u_{2}^{\beta_{1}} \Vert_{L^{2\eta}(\Omega)} 
   	 {}\\
   	&\quad\quad\quad\quad\qquad\qquad\qquad\quad\quad
   	+C \varepsilon_{7}^{-\sigma_{1}} \Vert u_{2}^{\beta_{1}} \Vert_{L^{q^{\ast}}(\Omega)}
   	\bigg)^{2} \\
   	&\quad\qquad\qquad  + \frac{\beta_{1}^{2}}{2\beta_{1}} 
   	\Vert \mu \tau \Vert_{L^{q/2}(\Omega)} 
   	\bigg(
   	\varepsilon_{8} \Vert u_{1}^{\beta_{1}} \Vert_{L^{2\eta}(\Omega)} 
   +C \varepsilon_{8}^{-\sigma_{2}} \Vert u_{1}^{\beta_{1}} \Vert_{L^{q^{\ast}}(\Omega)}
   	\bigg)^{2\beta},
   \end{align*}
   
   \noindent
  where \( \chi = \frac{s_1}{s_1 - 2} \), with \( 2 < s_1 < q \), and \( \eta = \frac{s_2}{s_2 - 2} \), with \( 2 < s_2 < \frac{2\beta q}{\beta q - q + 2} \). The exponents \( \sigma_1 \) and \( \sigma_2 \) are given respectively by \( \sigma_1 = \left( \frac{1}{2} - \frac{1}{q^*} \right) / \left( \frac{1}{q} - \frac{1}{2\chi} \right) \) and \( \sigma_2 = \left( \frac{1}{2} - \frac{1}{q^*} \right) / \left( \frac{1}{q} - \frac{1}{2\eta} \right) \). The quantity \( |\Omega| \) denotes the Lebesgue measure of the domain \( \Omega \subset \mathbb{R}^2 \).
    
		In fact, by first applying Young's inequality and then Hölder's inequality, we obtain
		\begin{align}\label{2.6}
			&\int_{\Omega} |\lambda \rho(x)|\, |v|^{\alpha} G_{1}(u_{1}) \, dx 
			\leq \int_{\Omega} |\lambda \rho(x)|\, u_{2}^{\alpha} \beta_{1}^{2} u_{1}^{2\beta_{1} - 1} \, dx \nonumber \\
			&\leq \beta_{1}^{2} \int_{\Omega} |\lambda \rho(x)|^{\frac{2\beta_{1} - 1}{2\beta_{1}}} u_{1}^{2\beta_{1} - 1} 
			|\lambda \rho(x)|^{\frac{1}{2\beta_{1}}} u_{2}^{\alpha} \, dx \nonumber \\
			&\leq \beta_{1}^{2}  \frac{2\beta_{1} - 1}{2\beta_{1}} \int_{\Omega} |\lambda \rho(x)|\, u_{1}^{2\beta_{1}} \, dx 
			+ \frac{\beta_{1}^{2}}{2\beta_{1}} \int_{\Omega} |\lambda \rho(x)|\, u_{2}^{2\beta_{1} \alpha} \, dx \nonumber \\
			&\leq \beta_{1}^{2} \frac{2\beta_{1} - 1}{2\beta_{1}} \Vert \lambda \rho \Vert_{L^{q/2}(\Omega)} 
			\left( \Vert u_{1} \Vert_{L^{\frac{2\beta_{1}q}{q-2}}(\Omega)} \right)^{2\beta_{1}} 
			\nonumber\\
			&+ \frac{\beta_{1}^{2}}{2\beta_{1}} \Vert \lambda \rho \Vert_{L^{q/2}(\Omega)} 
			\left( \Vert u_{2} \Vert_{L^{\frac{2\beta_{1} \alpha q}{q-2}}(\Omega)} \right)^{2\beta_{1} \alpha}.
		\end{align}
		
		Now, for \( 2 < s_{1} < q \) and \( \chi = \frac{s_{1}}{s_{1} - 2} \), it follows from~\eqref{2.6}, together with the interpolation inequality and Hölder's inequality, that
		\begin{align*}
			&|\lambda| \int_{\Omega} |\rho(x)|\, |v|^{\alpha} G_{1}(u_{1}) \, dx 
			\leq \beta_{1}^{2}  \frac{2\beta_{1} - 1}{2\beta_{1}} \,
			\Vert \lambda \rho \Vert_{L^{q/2}(\Omega)} \, \bigg(
			\varepsilon_{5} \Vert u_{1}^{\beta_{1}} \Vert_{L^{2\chi}(\Omega)} \nonumber \\
			&\quad + \varepsilon_{5}^{-\sigma_{1}} \vert\Omega\vert^{1/q^{\ast}-1/2} \Vert u_{1}^{\beta_{1}} \Vert_{L^{q^{\ast}}(\Omega)}
			\bigg)^{2} \nonumber \\
			&\quad + \frac{\beta_{1}^{2}}{2\beta_{1}} \,
			\Vert \lambda \rho \Vert_{L^{q/2}(\Omega)} \, \bigg(
			\varepsilon_{6} \Vert u_{2}^{\alpha \beta_{1}} \Vert_{L^{2\chi}(\Omega)} 
			+ \varepsilon_{6}^{-\sigma_{1}} \vert\Omega\vert^{1/q^{\ast}-1/2} \Vert u_{2}^{\alpha \beta_{1}} \Vert_{L^{q^{\ast}}(\Omega)}
			\bigg)^{2}.
		\end{align*}
		
		Based on similar arguments, we obtain
		
		\begin{align*}
			&\int_{\Omega} |\mu \tau(x)|\, |u|^{\beta} G_{2}(u_{2}) \, dx 
			\leq \int_{\Omega} |\mu \tau(x)|\, u_{1}^{\beta} \beta_{1}^{2} u_{2}^{2\beta_{1} - 1} \, dx \nonumber \\
			&\leq \beta_{1}^{2} \int_{\Omega} |\mu \tau(x)|^{\frac{2\beta_{1} - 1}{2\beta_{1}}} u_{2}^{2\beta_{1} - 1} 
			|\mu \tau(x)|^{\frac{1}{2\beta_{1}}} u_{1}^{\beta} \, dx \nonumber \\
			&\leq \beta_{1}^{2}  \frac{2\beta_{1} - 1}{2\beta_{1}} 
			\int_{\Omega} |\mu \tau(x)|\, u_{2}^{2\beta_{1}} \, dx 
			+ \frac{\beta_{1}^{2}}{2\beta_{1}} 
			\int_{\Omega} |\mu \tau(x)|\, u_{1}^{2\beta_{1}\beta} \, dx \nonumber \\
			&\leq \beta_{1}^{2} \frac{2\beta_{1} - 1}{2\beta_{1}} 
			\Vert \mu \tau \Vert_{L^{q/2}(\Omega)} 
			\left( \Vert u_{2} \Vert_{L^{\frac{2\beta_{1}q}{q-2}}(\Omega)} \right)^{2\beta_{1}} \nonumber\\ 
			&+ \frac{\beta_{1}^{2}}{2\beta_{1}} 
			\Vert \mu \tau \Vert_{L^{q/2}(\Omega)} 
			\left( \Vert u_{1} \Vert_{L^{\frac{2\beta_{1}\beta q}{q-2}}(\Omega)} \right)^{2\beta_{1}\beta} \nonumber \\
			&\leq \beta_{1}^{2} \frac{2\beta_{1} - 1}{2\beta_{1}} 
			\Vert \mu \tau \Vert_{L^{q/2}(\Omega)} 
			\left( \varepsilon_{7} \Vert u_{2}^{\beta_{1}} \Vert_{L^{2\eta}(\Omega)} 
			+ \varepsilon_{7}^{-\sigma_{2}} \Vert u_{2}^{\beta_{1}} \Vert_{L^{2}(\Omega)} \right)^{2} \nonumber \\
			&\quad + \frac{\beta_{1}^{2}}{2\beta_{1}} 
			\Vert \mu \tau \Vert_{L^{q/2}(\Omega)} 
			\left( \varepsilon_{8} \Vert u_{1}^{\beta_{1}} \Vert_{L^{2\eta}(\Omega)} 
			+ \varepsilon_{8}^{-\sigma_{2}} \Vert u_{1}^{\beta_{1}} \Vert_{L^{2}(\Omega)} \right)^{2\beta} \nonumber \\
			&\leq \beta_{1}^{2} \frac{2\beta_{1} - 1}{2\beta_{1}} 
			\Vert \mu \tau \Vert_{L^{q/2}(\Omega)} 
			\left( \varepsilon_{7} \Vert u_{2}^{\beta_{1}} \Vert_{L^{2\eta}(\Omega)} 
			+ \varepsilon_{7}^{-\sigma_{2}} \Omega\vert^{1/q^{\ast}-1/2} \Vert u_{2}^{\beta_{1}} \Vert_{L^{q^{\ast}}(\Omega)} \right)^{2} \nonumber \\
			&\quad + \frac{\beta_{1}^{2}}{2\beta_{1}} 
			\Vert \mu \tau \Vert_{L^{q/2}(\Omega)} 
			\left( \varepsilon_{8} \Vert u_{1}^{\beta_{1}} \Vert_{L^{2\eta}(\Omega)} 
			+ \varepsilon_{8}^{-\sigma_{2}} \vert \Omega\vert^{1/q^{\ast}-1/2} \Vert u_{1}^{\beta_{1}} \Vert_{L^{q^{\ast}}(\Omega)} \right)^{2\beta},
		\end{align*}
		where $\eta=\frac{s_{2}}{s_{2}-2}$ with $2<s_{2}<\frac{2\beta q}{\beta q-q+2}$. This concludes step two.
		
		Following from steps one and two, we conclude that	
		\begin{align*}
			\Vert H_{1}(u_{1}) \Vert_{L^{2\chi}(\Omega)}^{2}
			&\leq C_{1} \Vert\overline{b}_1\Vert_{L^{q/2}(\Omega)} \Vert u_{1} H_{1}'(u_{1}) \Vert_{L^{q^{\ast}}(\Omega)}^{2} \\
			&\quad + \beta_{1}^{2}  \frac{2\beta_{1} - 1}{2\beta_{1}} \,
			\Vert \lambda \rho \Vert_{L^{q/2}(\Omega)} \, \bigg(
			\varepsilon_{5} \Vert u_{1}^{\beta_{1}} \Vert_{L^{2\chi}(\Omega)} \\
			&\qquad + C \varepsilon_{5}^{-\sigma_{1}} \Vert u_{1}^{\beta_{1}} \Vert_{L^{q^{\ast}}(\Omega)}
			\bigg)^{2} \\
			&\quad + \frac{\beta_{1}^{2}}{2\beta_{1}} \,
			\Vert \lambda \rho \Vert_{L^{q/2}(\Omega)} \, \bigg(
			\varepsilon_{6} \Vert u_{2}^{\alpha \beta_{1}} \Vert_{L^{2\chi}(\Omega)} 
			+ C \varepsilon_{6}^{-\sigma_{1}} \Vert u_{2}^{\alpha \beta_{1}} \Vert_{L^{q^{\ast}}(\Omega)}
			\bigg)^{2}
		\end{align*}
		and
	\begin{align*}
		\Vert H_{2}(u_{2}) \Vert_{L^{2\eta}(\Omega)}^{2}
		&\leq C_{2} \Vert\overline{b}_2\Vert_{L^{q/2}(\Omega)}\Vert u_{2} H_{2}'(u_{2}) \Vert_{L^{q^{\ast}}(\Omega)}^{2} \\
		&\quad + \beta_{1}^{2}  \frac{2\beta_{1} - 1}{2\beta_{1}} \,
		\Vert \mu \tau \Vert_{L^{q/2}(\Omega)} \, \bigg(
		\varepsilon_{7} \Vert u_{2}^{\beta_{1}} \Vert_{L^{2\eta}(\Omega)} \\
		&\qquad + C \varepsilon_{7}^{-\sigma_{2}} \Vert u_{2}^{\beta_{1}} \Vert_{L^{q^{\ast}}(\Omega)}
		\bigg)^{2} \\
		&\quad + \frac{\beta_{1}^{2}}{2\beta_{1}} \,
		\Vert \mu \tau \Vert_{L^{q/2}(\Omega)} \, \bigg(
		\varepsilon_{8} \Vert u_{1}^{\beta_{1}} \Vert_{L^{2\eta}(\Omega)} 
		+ C \varepsilon_{8}^{-\sigma_{2}} \Vert u_{1}^{\beta_{1}} \Vert_{L^{q^{\ast}}(\Omega)}
		\bigg)^{2\beta}.
	\end{align*}
	
Now, applying the Minkowski inequality and taking into account the definitions of the functions \( u_t \) for \( t = 1,2 \), we obtain
\begin{align}
	\|u_1^{\beta_1}\|_{L^{2\chi q^{\ast}}(\Omega)} 
	&\leq \|u_1^{\beta_1} - k_1^{\beta_1}\|_{L^{2\chi q^{\ast}}(\Omega)} 
	+ |\Omega|^{\frac{1}{2\chi q^{\ast}} - \frac{1}{q^{\ast}}} \|u_1^{\beta_1}\|_{L^{q^{\ast}}(\Omega)} \notag \\
	\|u_2^{\beta_2}\|_{L^{2\eta q^{\ast}}(\Omega)} 
	&\leq \|u_2^{\beta_2} - k_2^{\beta_2}\|_{L^{2\eta q^{\ast}}(\Omega)} 
	+ |\Omega|^{\frac{1}{2\eta q^{\ast}} - \frac{1}{q^{\ast}}} \|u_2^{\beta_2}\|_{L^{q^{\ast}}(\Omega)}.
	\label{inequalitiesH_1andH_2}
\end{align}

	Thus, by combining the definition of \( H_t \), the inequalities \eqref{inequalitiesH_1andH_2}, the identity \( \alpha \beta = 1 \) with \( \alpha \leq 1 \), and the fact that \( \beta_1 \geq 1 \), we have
		\begin{align*}
			\Vert u_{1} \Vert_{L^{2\chi \beta_{1}}(\Omega)}^{2\beta_{1}} 
			&\leq C \beta_{1}^{2} \Vert u_{1} \Vert_{L^{q^{\ast} \beta_{1}}(\Omega)}^{\beta_{1}} 
			+ C \beta_{1}^{2} \left( \varepsilon_{5} \Vert u_{1} \Vert_{L^{2\chi \beta_{1}}(\Omega)}^{\beta_{1}} 
			+ C \varepsilon_{5}^{-\sigma_{1}} \Vert u_{1} \Vert_{L^{q^{\ast} \beta_{1}}(\Omega)}^{\beta_{1}} \right)^{2} \\
			&\quad + C \beta_{1}^{2} \left( \varepsilon_{6} \Vert u_{2}^{\alpha} \Vert_{L^{2\chi \beta_{1}}(\Omega)}^{\beta_{1}} 
			+ C \varepsilon_{6}^{-\sigma_{1}} \Vert u_{2}^{\alpha} \Vert_{L^{q^{\ast} \beta_{1}}(\Omega)}^{\beta_{1}} \right)^{2}
		\end{align*}
		and		
		\begin{align*}
			\Vert u_{2} \Vert_{L^{2\eta \beta_{1}}(\Omega)}^{\alpha \beta_{1}} 
			&\leq C \beta_{1}^{\alpha} \Vert u_{2} \Vert_{L^{q^{\ast} \beta_{1}}(\Omega)}^{\alpha \beta_{1}} 
			+ C \beta_{1}^{\alpha} \left( \varepsilon_{7}^{\alpha} \Vert u_{2} \Vert_{L^{2\chi \beta_{1}}(\Omega)}^{\alpha \beta_{1}} 
			+ C \varepsilon_{7}^{-\alpha\sigma_{2}} \Vert u_{2} \Vert_{L^{q^{\ast} \beta_{1}}(\Omega)}^{\alpha \beta_{1}} \right) \\
			&\quad + C \beta_{1}^{\alpha} \left( \varepsilon_{8} \Vert u_{1} \Vert_{L^{2\chi \beta_{1}}(\Omega)}^{\beta_{1}} 
			+ C \varepsilon_{8}^{-\sigma_{2}} \Vert u_{1} \Vert_{L^{q^{\ast} \beta_{1}}(\Omega)}^{\beta_{1}} \right).
		\end{align*}
		Hence, by applying Hölder's inequality together with the previous inequalities, and using the assumptions \( \beta_1 \geq 1 \) and \( \alpha \leq 1 \), we obtain
		\begin{align*}
			\Vert u_{1} \Vert_{L^{2\chi \beta_{1}}(\Omega)} 
			&\leq C^{1/\beta_{1}} \beta_{1}^{1/\beta_{1}} 
			\Vert u_{1} \Vert_{L^{q^{\ast} \beta_{1}}(\Omega)} 
			+ C^{1/\beta_{1}} \beta_{1}^{1/\beta_{1}} \, \bigg(
			\varepsilon_{5} \Vert u_{1} \Vert_{L^{2\chi \beta_{1}}(\Omega)} \\
			&\quad + C \varepsilon_{5}^{-\sigma_{1}} \Vert u_{1} \Vert_{L^{q^{\ast} \beta_{1}}(\Omega)}
			\bigg) \\
			&\quad + C^{1/\beta_{1}} \beta_{1}^{1/\beta_{1}} \, \bigg(
			\varepsilon_{6} \Vert u_{2}^{\alpha} \Vert_{L^{2\chi \beta_{1}}(\Omega)} 
			+ C \varepsilon_{6}^{-\sigma_{1}} \Vert u_{2}^{\alpha} \Vert_{L^{q^{\ast} \beta_{1}}(\Omega)}
			\bigg)\\
			&\leq C^{1/\beta_{1}} \beta_{1}^{1/\beta_{1}} 
			\Vert u_{1} \Vert_{L^{q^{\ast} \beta_{1}}(\Omega)} 
			+ C^{1/\beta_{1}} \beta_{1}^{1/\beta_{1}} \, \bigg(
			\varepsilon_{5} \Vert u_{1} \Vert_{L^{2\chi \beta_{1}}(\Omega)} \\
			&\quad + C \varepsilon_{5}^{-\sigma_{1}} \Vert u_{1} \Vert_{L^{q^{\ast} \beta_{1}}(\Omega)}
			\bigg) \\
			&\quad + C^{1/\beta_{1}} \beta_{1}^{1/\beta_{1}} \, \bigg(
			\varepsilon_{6}\Vert u_{2} \Vert_{L^{2\chi \beta_{1}}(\Omega)}^{\alpha} 
			+ C \varepsilon_{6}^{-\sigma_{1}} \Vert u_{2} \Vert_{L^{q^{\ast} \beta_{1}}(\Omega)}^{\alpha}
			\bigg)
		\end{align*}
		and
		\begin{align*}
			\Vert u_{2} \Vert_{L^{2\eta \beta_{1}}(\Omega)}^{\alpha} 
			&\leq C^{1/\beta_{1}} \beta_{1}^{\alpha/\beta_{1}} 
			\Vert u_{2} \Vert_{L^{q^{\ast} \beta_{1}}(\Omega)}^{\alpha} 
			+ C^{1/\beta_{1}} \beta_{1}^{\alpha/\beta_{1}} \, \bigg(
			\varepsilon_{7}^{\alpha} \Vert u_{2} \Vert_{L^{2\chi \beta_{1}}(\Omega)}^{\alpha} \\
			&\quad + C \varepsilon_{7}^{-\alpha\sigma_{2}} \Vert u_{2} \Vert_{L^{q^{\ast} \beta_{1}}(\Omega)}^{\alpha}
			\bigg) \\
			&\quad + C^{1/\beta_{1}} \beta_{1}^{\alpha/\beta_{1}} \, \bigg(
			\varepsilon_{8} \Vert u_{1} \Vert_{L^{2\chi \beta_{1}}(\Omega)} 
			+ C \varepsilon_{8}^{-\sigma_{2}} \Vert u_{1} \Vert_{L^{q^{\ast} \beta_{1}}(\Omega)}
			\bigg).
		\end{align*}
		
	Taking \( \eta = \chi \), that is, \( s := s_1 = s_2 \), and using that \( C^{1/\beta_1} \beta_1^{\beta_1} \) is uniformly bounded for \( \beta_1 \geq 1 \), it is then possible to choose \( \varepsilon_j \) (for \( j = 5,6,7,8 \)) small enough independently of \( \beta_1 \), so that we obtain
	\begin{align*}
		\Vert u_{1} \Vert_{L^{2\chi \beta_{1}}(\Omega)} 
		+ \Vert u_{2} \Vert_{L^{2\chi \beta_{1}}(\Omega)}^{\alpha}
		\leq C^{1/\beta_{1}} \beta_{1}^{1/\beta_{1}} 
		\left( \Vert u_{1} \Vert_{L^{q^{\ast} \beta_{1}}(\Omega)} 
		+ \Vert u_{2} \Vert_{L^{q^{\ast} \beta_{1}}(\Omega)}^{\alpha} \right),
	\end{align*}
	where the constant \( C \) depends only on \( |\Omega| \), \( q \), \(\alpha\), \( \|\lambda \rho\|_{L^{q/2}(\Omega)} \), \( \|\mu \tau\|_{L^{q/2}(\Omega)} \), and \( \nu_{t} \) for \( t = 1,2 \).
	
		Hence, \begin{align}\label{2.8}
			\Vert u_{1} \Vert_{L^{\gamma q^{\ast} \beta_{1}}(\Omega)} 
			+ \Vert u_{2} \Vert_{L^{\gamma q^{\ast} \beta_{1}}(\Omega)}^{\alpha} 
			\leq C^{1/\beta_{1}} \beta_{1}^{1/\beta_{1}} 
			\left( \Vert u_{1} \Vert_{L^{q^{\ast} \beta_{1}}(\Omega)} 
			+ \Vert u_{2} \Vert_{L^{q^{\ast} \beta_{1}}(\Omega)}^{\alpha} \right),
		\end{align}
		where \( \gamma = \frac{s(q - 2)}{q(s - 2)} \), which satisfies \( \gamma > 1 \) due to the assumption \( 2 < s < q \).

		Now, by iterating the estimate \eqref{2.8} with \( \beta_1 = \gamma^m \) for each \( m \in \mathbb{N}\cup\{0\} \), we obtain
	\begin{align*}
		\Vert u_{1} \Vert_{L^{\gamma^{m} q^{\ast}}(\Omega)} 
		+ \Vert u_{2} \Vert_{L^{\gamma^{m} q^{\ast}}(\Omega)}^{\alpha}
		&\leq \prod_{j=0}^{m-1} \left(C \gamma^{j} \right)^{\gamma^{-j}} 
		\left[ \Vert u_{1} \Vert_{L^{q^{\ast}}(\Omega)} + \Vert u_{2} \Vert_{L^{q^{\ast}}(\Omega)}^{\alpha} \right] \\
		&\leq C^{n_{1}} \gamma^{n_{2}} 
		\left[ \Vert u_{1} \Vert_{L^{q^{\ast}}(\Omega)} + \Vert u_{2} \Vert_{L^{q^{\ast}}(\Omega)}^{\alpha} \right] \\
		&\leq C 
		\left[ \Vert u_{1} \Vert_{L^{q^{\ast}}(\Omega)} + \Vert u_{2} \Vert_{L^{q^{\ast}}(\Omega)}^{\alpha} \right],
	\end{align*}
		where $n_{1}:=\displaystyle\sum_{j=0}^{m-1}\gamma^{-j}$ and $n_{2}:=\displaystyle\sum_{j=0}^{m-1}j\gamma^{-j}$.
		
		Thus, taking the limit as \( m \to +\infty \), we obtain
		 \begin{align}\label{2.10}
			\Vert u_{1} \Vert_{L^{\infty}(\Omega)} 
			+ \Vert u_{2} \Vert_{L^{\infty}(\Omega)}^{\alpha} 
			\leq C \left[ \Vert u_{1} \Vert_{L^{q^{\ast}}(\Omega)} + \Vert u_{2} \Vert_{L^{q^{\ast}}(\Omega)}^{\alpha} \right].
		\end{align}
		
		Now, using interpolation inequality and \eqref{2.10} we have 
		\begin{align*}
			\Vert u_{1} \Vert_{L^{\infty}(\Omega)} 
			+ \Vert u_{2} \Vert_{L^{\infty}(\Omega)}^{\alpha} 
			\leq C \left[ \Vert u_{1} \Vert_{L^{2}(\Omega)} + \Vert u_{2} \Vert_{L^{2}(\Omega)}^{\alpha} \right].
		\end{align*}
				
		Therefore, combining the previous estimate with the Minkowski inequality, we obtain \begin{align*}
			\Vert u^{+} \Vert_{L^{\infty}(\Omega)} 
			+ \Vert v^{+} \Vert_{L^{\infty}(\Omega)}^{\alpha} 
			\leq C \left[ 
			\Vert u^{+} \Vert_{L^{2}(\Omega)} 
			+ \Vert v^{+} \Vert_{L^{2}(\Omega)}^{\alpha} 
			+ k_1 + k_2^{\alpha} 
			\right],
		\end{align*}
		where $C = C(\nu_t, \lambda, \mu, \Vert \rho \Vert_{L^{q/2}(\Omega)}, \Vert \tau \Vert_{L^{q/2}(\Omega)}, \alpha, q, \vert\Omega\vert)$.
		
	\end{proof}

\noindent\section*{Acknowledgments}
\addcontentsline{toc}{section}{Acknowledgments}

\noindent E.J.F. Leite has been supported by CNPq-Grant 316526/2021-5. The first author gratefully acknowledges the invitation of Prof. Edir Leite to visit UFSCar, and thanks the department for its warm hospitality. He also thanks the Department of Mathematics at UFRR for granting leave to carry out his postdoctoral research.

\vspace{0.5em}

\textbf{Data availability} \quad Data sharing not applicable to this article as no datasets were generated or analyzed during the current study.

\section*{Declarations}

\textbf{Conflict of interest} \quad The authors declare that they have no conflict of interest.

\vspace{0.5em}

\textbf{Ethical approval} \quad The research does not involve humans and/or animals. The authors declare that there are no ethics issues to be approved or disclosed.

\bibliographystyle{abbrv}
\bibliography{AFR}
	
\end{document}